\newtheorem{thm}{Theorem}[section]
\newtheorem{lem}[thm]{Lemma}
\theoremstyle{definition}
\newtheorem{dfn}[thm]{Definition}
\theoremstyle{remark}
\newtheorem{rem}[thm]{Remark}
\newtheorem*{rem*}{Remark}
\newtheorem{example}[thm]{Example}
\newtheorem{THEO}{Theorem}
\newcommand{\mcl}[1]{\mathcal{#1}}
\newcommand{\cC}{\mcl{C}}
\newcommand{\ds}{\displaystyle}
\begin{document}

\date{}
\title{On algebras and matroids associated to undirected graphs}

\author{Boris Shapiro, Arkady Vaintrob}

\maketitle

\begin{abstract}
In this short note we make a few remarks on a class of  generalized incidence matrices whose matroids do not depend
on the orientation of the underlying graph and  natural commutative algebras associated to such matrices.
\end{abstract}
\section{Introduction}

\

\noindent
Two decades ago answering a geometrical question of Vl.~Arnold \cite{Ar} related to the Berry phases, the first author jointly with M.~Shapiro and A.~Postnikov introduced a certain algebra initially related  to a complete graph and later defined for any given undirected graph, see~\cite{SS, PSS, PS}. At the same time and independently, motivated by the necessities of the matroid theory D.~Wagner \cite{W1, W2} discovered the same algebra.


\medskip

The latter algebra is defined as follows. Let $G=(V,E)$ be a  undirected graph (multiple edges and loops are allowed) with the vertex set $V$ and the  edge set $E$.
For a field $K$ of zero characteristic, consider the square-free algebra defined as the quotient
\[B(E)=K[E]/(x_e^2), \ e\in E,\]
of the polynomial algebra in the \emph{edge variables}  $x_e, \ e\in E$, by the ideal generated by their squares.
\\[4pt]
Given an orientation $\sigma$ of $G$, we define the standard directed \emph{incidence matrix} $A(G,\sigma)$ of $G$ whose entries are given by
\begin{equation}\label{eq:incid}
  a_{v,e}=\begin{cases}
   -1, &  \text{if the edge $e$ begins at  $v$};\\
    1, & \text{if $e$ ends at  $v$};\\
    0, & \text{if $e$ is a loop or it is not incident to $v$.}
\end{cases}
\end{equation}
(The rows of $A(G,\sigma)$ are labeled by the vertices  and its columns by the edges of $G$).
Define the algebra
\begin{equation}
  \label{eq:alg}
  \cC(G,\sigma):=K[y_v]\subset B(E)
\end{equation}
generated by the elements
\begin{equation}
   \label{eq:gener}
  y_v =\sum_{e\in E} a_{v,e} x_e \in B(G), \mathrm{\ for\ }  v\in V.
\end{equation}

\begin{rem}
Observe that reversing the orientation of any edge $e\in E$ we simply change the sign of the corresponding generator $x_e$. So the isomorphism class of $\cC(G,\sigma)$ as a graded algebra does not depend on the choice of $\sigma$. Therefore we will denote this graded algebra by  $\cC(G)$ skipping $\sigma$.
\end{rem}

If we view each edge variable $x_e$ as representing the flow along the directed edge $e$  in $G$, then the variable $y_v$ can be interpreted as the total flow through the vertex $v$.
Thus $\cC(G)$ can be thought of as the algebra generated by the flows through the set of all vertices and, following~\cite{W1}, we call it the \emph{circulation algebra} of $G$.
\\[4pt]

One of the main results about $\cC(G)$ is the formula for  its Hilbert function
$$\ds h_{\cC(G)}(t):=\sum_{k\ge 0}\dim \cC^k(G)$$  obtained in~\cite{PS} and independently by D.~Wagner~\cite{W1}  which contains important information about the spanning subgraphs of $G$. (For  various graph- and matroid-related terminology and facts consult e.g. \cite{Ox}). In what follows, theorems, conjectures, etc., labeled by letters, 
are borrowed from the existing literature, while those labeled by numbers 
are hopefully new. 

\begin{dfn}\label{df1}Ê Let us fix a linear ordering of all edges of the graph $G$. 
For a spanning forest $F\subset G$, an edge $e \in G \setminus T$ is called \emph {externally active} for $F$ if there exists a cycle $C\subseteq G$ such that $e$ is the minimal edge of $C$ in the chosen ordering and $(C \setminus \{e\}) \subset T$. The \emph {external activity} of $F$ is the number of its externally active edges. 
\end{dfn}
Let $N_G^k$ denote the number of spanning forests $F \subset G$ of external activity $k$. Even though the notion of external activity depends on a particular choice of ordering of edges, the numbers $N_G^k$ are known to be independent of the latter choice.

\medskip
\begin{THEO}[\cite{PS, W1}]  \label{th:BASIC}The dimension $\dim_K \cC(G)$ of the algebra $\cC(G)$ 
is equal to  the number of spanning forests in $G$.
Additionally, the dimension $\dim_K \cC^k(G)$  of the $k$-th  graded component of $\cC(G)$  is
equal to the number  of spanning forests $S\subset E$ of $G$ with \emph{external activity}
$|E|-|S|-k$. Here $|\Omega|$ stands for the number of edges in a subgraph $\Omega$. 
\end{THEO}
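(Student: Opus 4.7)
My plan is to produce an explicit basis of $\cC(G)$ indexed by the spanning forests of $G$, where each $F$ contributes a single basis element in graded degree $|E|-|F|-\mathrm{ext}(F)$; the graded statement then follows immediately and the total dimension by summation.

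Work inside $B(E)$. A product $y_{v_1}\cdots y_{v_k}$ expands as a signed sum $\sum(\pm 1)\,x_S$ over $k$-subsets $S\subset E$: the coefficient of $x_S$ counts, with signs, the bijections $\{v_1,\ldots,v_k\}\to S$ pairing each $v_i$ with one of its incident edges in $S$. Thus $\cC^k(G)$ embeds into the span of square-free monomials of degree $k$. The key structural input is the family of \emph{cut relations}: for any $T\subseteq V$, $\sum_{v\in T}y_v$ is a linear combination of $x_e$'s over the edges $e\in\partial T$, so by $x_e^2=0$ one has $(\sum_{v\in T}y_v)^{|\partial T|+1}=0$ in $\cC(G)$. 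Fix the edge ordering from Definition~\ref{df1}, let $A(F)$ denote the externally active edges of a spanning forest $F$, and set $I(F):=E\setminus(F\cup A(F))$, so $|I(F)|=|E|-|F|-\mathrm{ext}(F)$. To each $F$ I would attach an element $w_F\in\cC^{|I(F)|}(G)$ — most naturally a product $\prod_v y_v^{d_v(F)}$ whose exponents $d_v(F)$ read off the local structure of $F$ at $v$ — designed so that its $x_S$-expansion has leading monomial (in the lex order on subsets induced by the edge ordering) equal up to a nonzero scalar to $x_{I(F)}$. Distinct forests yield distinct sets $I(F)$, so the $w_F$'s have distinct leading terms and are linearly independent.

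The harder half is spanning, which is where the main obstacle lies. Starting from an arbitrary element $z\in\cC^k(G)$ with leading monomial $x_S$, I would iteratively rewrite $z$ modulo terms of strictly later leading support by applying the cut relation for a well-chosen $T$: any $S$ that contains the minimum edge $e$ of a cycle $C$ with $C\setminus\{e\}\subseteq E\setminus S$ can be pushed forward in lex order this way. The supports surviving reduction are precisely the sets $I(F)$, which is where the external-activity condition of Definition~\ref{df1} enters — by definition, $I(F)$ consists of the edges $e\notin F$ that do \emph{not} witness such a cycle. The technical core is then twofold: (i) identifying the exponents $d_v(F)$ so that the coefficient of $x_{I(F)}$ in the expansion of $w_F$ is genuinely nonzero (signs of admissible bijections must not cancel, which one expects to follow from an orientation argument keyed to the tree structure of $F$), and (ii) proving rigorously that every element of $\cC^k(G)$ reduces to a combination of $w_F$'s under the cut-relation algorithm. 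With this matroidal bookkeeping in place, both parts of the theorem come out together.
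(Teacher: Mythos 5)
First, a point of comparison: the paper contains no proof of this statement. Theorem~\ref{th:BASIC} carries a letter label, and the authors state explicitly that lettered results are imported from the literature (here \cite{PS, W1}); so your attempt can only be judged against the published proofs, not against anything in this text. Your outline is in the spirit of those proofs — the cut relations $\bigl(\sum_{v\in T}y_v\bigr)^{|\partial T|+1}=0$ are exactly the generators of the defining ideal in the presentation of $\cC(G)$ as a quotient of $K[y_v]$ in \cite{PS}, and a monomial basis indexed by forests via external activity is how the Hilbert function is extracted there (Wagner \cite{W1} instead runs a deletion--contraction induction on the Hilbert series). But as written your text is a plan rather than a proof: the two steps you yourself flag as "the technical core" are precisely where all the content of the theorem lives, and neither is carried out.

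Concretely: (a) the elements $w_F$ are never constructed — "exponents $d_v(F)$ reading off the local structure of $F$" is a placeholder, and the nonvanishing of the coefficient of $x_{I(F)}$ is exactly the point at which the specific $\pm1$ pattern of $A(G,\sigma)$ must be used. No purely formal sign argument can succeed, because the identical outline applied to the unsigned incidence matrix $A_+(G)$ must yield a different answer (odd-circle pseudoforests, Theorem~\ref{thm:even-circ}), so the signed-bijection count genuinely can vanish for other matrices. (b) The spanning step conflates two different combinatorial conditions: your reduction criterion asks for a cycle $C$ with minimal edge $e\in S$ and $C\setminus\{e\}\subseteq E\setminus S$, whereas $e$ failing to be externally active for $F$ only forbids cycles with $C\setminus\{e\}\subseteq F$; since $E\setminus I(F)=F\cup A(F)$ is in general strictly larger than $F$, the assertion that "the supports surviving reduction are precisely the sets $I(F)$" is not true by definition and requires a broken-circuit exchange argument. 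Moreover the mechanism of "applying the cut relation" is unspecified: that relation is an identity holding automatically in $B(E)$, so it does not by itself rewrite a given $z\in\cC^k(G)$; what you actually need is that every $x_S$ with $S$ reducible lies, modulo lex-later monomials, in the span of the $w_F$'s, and that claim is nowhere established. Until (a) and (b) are supplied, neither linear independence nor spanning is proved.
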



Later G.~Nenashev~\cite{Ne} proved that the circulation algebras  $\cC(G_1)$ and $\cC(G_2)$ of two
graphs are isomorphic (as algebras) if and only if the usual graphical matroids of $G_1$ and $G_2$ are
isomorphic.



\medskip
To move further, notice that  D.~Wagner~\cite{W2} and Postnikov-Shapiro-Shapiro~\cite{PSS} introduced a more general
 class of algebras $\cC(A)$ defined similarly to  $\cC(G)$, but whose coefficients
$a_{v,e}$ in~(\ref{eq:gener})   are given by the entries of an arbitrary rectangular matrix $A=(a_{v,e})$ over the field
$K$.  We will call $\cC(A)$ the \emph{ circulation algebra} of the matrix $A$. In particular, observe that  $\cC(G)=\cC(A(G,\sigma))$, where $A(G,\sigma)$ is the directed incidence matrix of the graph $G$
corresponding to the orientation $\sigma$. The next definition is analogous to the Definition~\ref{df1}. 

\begin{dfn} Let $A \subset L$ be an arbitrary finite collection of vectors in a  linear space $L$ over the field $K$. Let us fix a linear ordering of  vectors in $A$. 
For an arbitrary subset  $I\subset A$ consisting of linearly independent vectors (independent subset), we call a vector $v \in A \setminus I$  \emph {externally active} for $I$ if there exists a dependent set of vectors $J\subseteq A$ such that $v$ is the minimal vector of $J$ in the chosen ordering and $J \setminus \{v\} \subset I$. The \emph {external activity} of  $I$  is, by definition,  the number of its externally active edges. 
\end{dfn}
Again let $N_A^k$ denote the number of  independent subsets $I \subset A$  having external activity $k$. Even though the notion of external activity depends on a particular choice of linear ordering of vectors, the numbers $N_I^k$ are known to be independent of this choice. The following analog of Theorem~\ref{th:BASIC} is valid.

\begin{THEO}[\cite{PSS}] \label{th:B}
The dimension $\dim_K \cC(A)$  is equal to the number of the independent subsets in (the matroid $M(A)$ of)  the vector configuration given by the columns of $A$.    Moreover, the dimension $\dim_K \cC^k(A)$ of the $k$-th graded component  $\cC^k(A)\subset  \cC(A)$ is equal to the number of independent subsets $I \subset  M(A)$ such that $k = |M(A)|- |I| -\mathrm{act}(I)$ where $|\omega|$ stands for the cardinality of a finite set $\omega$.

  \end{THEO}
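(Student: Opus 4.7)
The proof of Theorem~\ref{th:B} follows the same strategy as that of Theorem~\ref{th:BASIC} in~\cite{PS, W1} (which it recovers in the special case $A = A(G,\sigma)$), reformulated in the matroid-theoretic language of independent sets of $M(A)$. The overall plan is to exhibit a $K$-basis of $\cC(A)$ indexed by the independent sets of $M(A)$ in such a way that each independent set $I$ contributes exactly one basis vector, landing in the graded component of degree $|M(A)| - |I| - \mathrm{act}(I)$.

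The first step is to produce a spanning set by reduction to no-broken-circuit monomials. Fix the chosen linear ordering of the columns of $A$. Expanding a product of generators $y_{v_1} \cdots y_{v_k}$ in $B(E)$ using the relations $x_e^2 = 0$ yields a $K$-linear combination of squarefree monomials supported on $k$-subsets of $E$. Whenever the support $S$ contains a broken circuit $C \setminus \{\min C\}$, the corresponding linear dependence among the columns of $A$, together with the relations $x_e^2 = 0$, rewrites the leading term as a combination of monomials whose supports are strictly smaller in a suitable lexicographic order. Iterating these reductions shows that every element of $\cC^k(A)$ is a $K$-combination of \emph{NBC monomials} $x_I$ with $|I| = k$ and $I$ independent. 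A classical identity due to Bj\"orner--Las Vergnas, arising from the NBC decomposition of the Tutte polynomial, then puts these in bijection with independent subsets $I \subset M(A)$ satisfying $|I| + \mathrm{act}(I) = |M(A)| - k$, giving the desired upper bound on $\dim \cC^k(A)$.

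The second step is to prove linear independence of the NBC monomials produced above inside $\cC(A)$. A clean approach is induction on $|E|$ via matroid deletion--contraction: for a well-chosen element $e \in E$ (non-loop and non-coloop), construct a short exact sequence relating $\cC(A)$ to $\cC(A \setminus e)$ and $\cC(A/e)$ compatibly with the grading, and match this sequence against the deletion--contraction recursion for the external-activity-weighted count of independent sets. An alternative route is to construct, for each NBC independent set $I$, an explicit dual functional on $\cC(A)$ built from a suitable minor of $A$ supported on $I$, verifying directly that it detects the corresponding monomial and annihilates the others.

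The main obstacle will be this second step. The spanning argument is essentially formal, and the underlying matroid identity is classical. The difficulty lies in establishing linear independence of a family of the correct size inside the subalgebra $\cC(A) \subset B(E)$: because $B(E)$ is not a polynomial ring, Gr\"obner-type arguments must be adapted, and the deletion--contraction recursion has to be lifted from a generating-function identity to an identity of graded vector spaces, with careful bookkeeping of the contribution of $e$ in each of the three algebras.
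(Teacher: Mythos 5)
First, a point of orientation: the paper does not prove this statement at all. Theorem~\ref{th:B} carries a letter label precisely because, as the authors explain in the introduction, lettered results are quoted from the literature (here from~\cite{PSS}); there is no proof in the text to compare yours against, so I can only judge your proposal on its own terms.

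On those terms, your first step contains a genuine error. It is not true that $\cC^k(A)$ is spanned by squarefree monomials $x_I$ with $I$ an independent (let alone NBC) set of size $k$. Take $A=(1\ 1)$, a single row with two equal columns: then $y=x_1+x_2$ and $\cC^2(A)$ is spanned by $y^2=2x_1x_2$, whose support $\{1,2\}$ is a circuit; there are no independent sets of size $2$ at all, yet $\dim_K\cC^2(A)=1$. The statement itself signals the mismatch: the degree attached to an independent set $I$ is $k=|M(A)|-|I|-\mathrm{act}(I)$, a ``codegree''-type statistic, not $|I|$, so the independent sets indexing a basis of $\cC^k(A)$ generally have size different from $k$ and the corresponding basis vectors cannot be the monomials $x_I$. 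Invoking the Bj\"orner--Las Vergnas NBC count compounds the problem: the number of NBC sets of size $k$ (a coefficient of the characteristic polynomial) is not the number of independent sets with $|I|+\mathrm{act}(I)=|M(A)|-k$ (in the example above these sequences are $(1,1,0)$ and $(1,1,1)$). So the ``upper bound'' produced by your first step is simply the wrong number, and no amount of work on the linear-independence step could repair it.

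For the record, the known argument runs differently. Since $\mathrm{char}\,K=0$, polarization shows that $\cC^k(A)$ is spanned by the powers $\bigl(\sum_v c_vy_v\bigr)^k=k!\sum_{|S|=k}\bigl(\prod_{e\in S}\ell_e(c)\bigr)x_S$, where $\ell_e$ is the linear form on $K^V$ given by the $e$-th column of $A$; hence $\dim_K\cC^k(A)$ equals the dimension of the span of the degree-$k$ polynomials $\prod_{e\in S}\ell_e$, $|S|=k$, in $K[c_v]$. That dimension is then computed by a deletion--contraction induction on the vector configuration, matching the Tutte-type recursion for the activity-weighted count of independent sets. Your instinct that deletion--contraction is the engine of the proof is sound, but it must be applied to this space of products of linear forms, not to a putative NBC monomial basis inside $B(E)$.
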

  
  In~\cite{W2} D.~Wagner proved that the circulation algebras of two representable matroids are
isomorphic as \emph{graded algebras} if and only if the corresponding vector configurations are
projectively equivalent. 

The latter result of Wagner implies the former result of Nenashev using the unique representability of regular matroids
(i.e.~the fact that the vector configuration representing a regular matroid can be recovered from it up to
projective equivalence, see e.g.~\cite[Prop.~6.6.5]{Ox}) together with the simple fact that two finite-dimensional graded algebras generated
in degree one are isomorphic if and only if they are isomorphic as graded algebras. (In 
e.g.~\cite{BZ} the latter fact is proven in the more general set-up of finitely generated algebras).

\medskip
Other results about $\cC(G)$, $\cC(A)$ and some generalizations of these algebras can be found in  \cite{PS} and a number of follow-up papers on this topic. 
In this note we discuss  several analogs of $\cC(G)$ related to generalized incidence matrices as well as some properties of the corresponding matroids.

\medskip
\noindent
\emph{Acknowledgements.} The first author is sincerely grateful to the University of Oregon, Eugene for the hospitality in April 2019 when a part of this project was carried out.  The second author wants to acknowledge  the hospitality and financial support of Uppsala university in June-July 2019. The authors want to thank G.~Nenashev for his interest in this note and several useful comments.

\section{Results}

    As we mention above, the algebra $\cC(G)$ contains interesting graph-theoretical information about   $G$.
    Although its initial definition depends on a choice of  orientation $\sigma$ of the graph $G$, in the end $\sigma$ turns out to be
    irrelevant since different orientations give rise to isomorphic algebras. Below we will be interested in similar situations when either the  definition of an algebra related to $G$ does not use orientation of its edges or when all possible orientations lead to isomorphic algebras.

\subsection{Algebra  $\cC_+(G)$}
A natural algebra  similar to $\cC(G)$ and whose definition
  does not require any orientation  of edges of $G$ can be introduced as follows. 
    Denoting it by  $\cC_+(G)$ we define it exactly like  $\cC(G)$
    except for the formula for the coefficients   $a_{v,e}$ which now will  be  given  by
\[\ds  a_{v,e}=\begin{cases}
   1, &  \text{if the edge $e$ contains the vertex $v$};\\
   0, & \text{otherwise.}
\end{cases}
\]
 In other words,  $\cC_+(G)= \cC(A_+(G)), $
 where $A_+(G)$ is the standard \emph{undirected} incidence matrix of $G$.

 Similarly to $\cC(G)$, the graded algebra $\cC_+(G)$ contains interesting information about $G$.
 
 \begin{dfn}
(a) Given a graph $G=(V,E)$, we say that its (edge-induced) subgraph $F\subset E$ is an \emph{odd-circle pseudoforest}   if every connected component of $F$ is either a tree or a
unicycle whose cycle has odd length. (A \emph{unicycle}   is a graph containing a unique cycle, i.e. a connected graph obtained from a tree by adding exactly one edge connecting some of its vertices.)

\noindent
(b) A connected graph $G=(V,E)$ is called an \emph{even circuit} if it either is an even cycle or is a union of two odd cycles  sharing exactly one vertex or is a union of two disjoint odd cycles exactly two vertices of which are connected by a bridge. (The three types of even circuits are shown  in Figure~\ref{fig:evencirc}~a,b,c respectively; the second type can be thought as a degenerate case of the third type when the bridge contracts of a single vertex).

\end{dfn}

\begin{dfn}
Denote by $\mathcal{P}_+(G)$ the set of all odd-circle pseudoforests in $G$.  Given a linear ordering of the set $E$ of edges in $G$, we call an
edge $e$ \emph{evenly active} for a pseudoforest $F\in \mathcal{P}_+(G)$ if $F\cup \{e\}$ contains an
even circuit  in which $e$ is the smallest edge with respect to
the chosen ordering. 
\end{dfn}

  Our first result is as follows. %

\begin{thm}
  \label{thm:even-circ} Given an undirected graph $G=(V,E)$, one has the following. 

\smallskip
\noindent
{\rm 1.} The dimension $\dim_K \cC_+(G)$ of the algebra  $\cC_+(G)$ is equal to the number of spanning odd-circle
pseudoforests in $G$, i.e.
\[
  \dim_K \cC_+(G) = |\mathcal{P}_+(G)|.
\]

\smallskip
\noindent
{\rm 2.} 
 The dimension  $\dim_K \cC_+^k(G)$ of the $k$-th graded component of
 $\cC_+(G)$ is equal to the number of odd-circle pseudoforests $F\subseteq G$ whose
even activity equals $|E\setminus F|-k$, i.e. 
 \[
  \dim_K \cC_+^k(G)=\bigl| \{F\in \mathcal{P}_+(G) \,|\, \mathrm{act}_+(F)=|E \setminus  F| - k\}\bigr|~.
 \]
  \end{thm}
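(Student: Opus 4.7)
The plan is to deduce both parts from Theorem~\ref{th:B} applied to $A = A_+(G)$. This reduces the theorem to identifying the matroid $M(A_+(G))$ on the edge set $E$, via two claims: \emph{(i)} the independent sets of $M(A_+(G))$ are exactly the odd-circle pseudoforests of $G$, and \emph{(ii)} the circuits of $M(A_+(G))$ are exactly the even circuits of $G$. Given \emph{(i)}, the first statement of Theorem~\ref{th:B} directly yields Part~1. For Part~2, one observes that the external activity of an independent set $I$ admits the following equivalent formulation: $e \notin I$ is externally active precisely when $I\cup\{e\}$ contains a circuit in which $e$ is the minimum element. (Any dependent $J$ with $J\setminus\{e\}\subset I$ and $e=\min J$ contains a unique circuit, which must use $e$ and lie in $I\cup\{e\}$ because $I$ is independent.) Combined with \emph{(ii)}, this matches external activity with even activity, and Part~2 becomes the second conclusion of Theorem~\ref{th:B}.

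For \emph{(i)}, the column of $A_+(G)$ indexed by an edge $e=uv$ is $\mathbf{e}_u+\mathbf{e}_v\in K^V$ (equal to $2\mathbf{e}_v$ for a loop at $v$). Columns indexed by edges lying in different connected components of a subset $F\subseteq E$ have disjoint coordinate supports, so independence is additive over components and one reduces to the case of connected $F$. The analysis then splits into four subcases: if $F$ is a tree, induction on $|F|$ by peeling off a leaf (whose edge introduces a new coordinate not used elsewhere) gives independence; if $F$ is a unicycle with odd cycle of length $2k+1$, the cycle's $(2k+1)\times(2k+1)$ incidence matrix has determinant $\pm 2\neq 0$ by a direct computation, and the tree part extends independence by the leaf argument; if $F$ is a unicycle with an even cycle, the alternating sum of the cycle-edge columns vanishes; if a connected component of $F$ has cyclomatic number $\geq 2$, then it has more edges than vertices in its support, forcing dependence on dimensional grounds. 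These four cases together characterize the independent sets of $M(A_+(G))$ as the odd-circle pseudoforests of $G$.

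For \emph{(ii)}, a circuit is a minimal dependent set, and by \emph{(i)} dependence requires a component containing an even cycle or at least two distinct cycles. A minimal dependent set meeting an even cycle $C$ must equal $C$, giving type (a). Otherwise every cycle present is odd, some component contains at least two of them, and a minimal such configuration consists of exactly two odd cycles joined by a (possibly trivial) path internally disjoint from them — types (b) and (c) of Figure~\ref{fig:evencirc}. An elementary parity argument rules out theta-type sharing of two vertices between the cycles (the three cycle-lengths in a theta sum to an even number, so at least one must be even, contradicting the odd-only assumption), and a direct case check shows that deleting a path edge leaves two independent odd-unicycle components while deleting a cycle edge leaves a single odd unicycle with a pendant tree, so minimality holds. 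This completes the identification of the circuits, and the theorem follows.

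The main obstacle is the circuit analysis in \emph{(ii)}, essentially a classical statement from the frame-matroid theory of signed graphs (all-nonzero signatures) that must nevertheless be spelled out carefully — especially the reduction of arbitrary two-cycle configurations to the three shapes of Figure~\ref{fig:evencirc}. The independence analysis in \emph{(i)} is the routine part, and once both are in hand the theorem is a direct consequence of Theorem~\ref{th:B}.
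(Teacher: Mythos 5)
Your proposal is correct and follows essentially the same route as the paper: both reduce the theorem to Theorem~\ref{th:B} applied to $A_+(G)$ together with the identification of the matroid of the undirected incidence matrix as the even-circle matroid (independent sets $=$ odd-circle pseudoforests, circuits $=$ even circuits). Your write-up is somewhat more detailed than the paper's lemma --- e.g.\ the determinant computation for odd cycles, the parity argument excluding theta subgraphs, and the explicit minimality check --- whereas the paper instead exhibits explicit linear dependencies with coefficients $\pm 1$, $\pm 2$ on the three even-circuit types, but these are routine variations of the same argument.
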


\begin{figure}
\centering
\tikzstyle{every node}=[circle, draw, fill=black!50, inner sep=0pt, minimum width=4pt]
%
\begin{subfigure}[b]{0.16\textwidth}  \centering \resizebox{\linewidth}{!}{
\begin{tikzpicture}[thick,scale=0.21]%
    \draw \foreach \x in {0,45,...,315} {  (\x:4) node{} -- (\x+45:4)    };
  \end{tikzpicture}
  }
\caption{even cycle}
\label{fig:a}
\end{subfigure}
\hfill
\begin{subfigure}[b]{0.24\textwidth}  \centering \resizebox{\linewidth}{!}{
\begin{tikzpicture}[thick,scale=0.21]%
   \draw \foreach \x in {0,72,...,288} {    (\x:4) node{} -- (\x+72:4)    };
   \draw (0:4) -- ++ (30:5) node{} -- ++ (270:5) node{} -- ++ (150:5) node{};
\end{tikzpicture}
}
\caption{odd figure-eight}
\label{fig:b}
\end{subfigure}
\hfill
\begin{subfigure}[b]{0.35\textwidth}  \centering \resizebox{\linewidth}{!}{
\begin{tikzpicture}[thick,scale=0.18]%
   \draw \foreach \x in {0, 51.4, 102.9, 154.3, 205.8, 257.2, 308.6, 360}
          {        (\x:4) node{} -- (\x+51.43:4)      };
   \draw (0:4) -- ++ (0:3) node{}  -- ++ (0:3) node{}   ;
\begin{scope}[shift={(0:15)}]
     \draw \foreach \x in {36,108,...,324} {      (\x:5) node{} -- (\x+72:5)     };
\end{scope}
\end{tikzpicture}
}
\caption{odd handcuff}
\label{fig:c}
\end{subfigure}
\caption{Even circuits}
\label{fig:evencirc}
\end{figure}
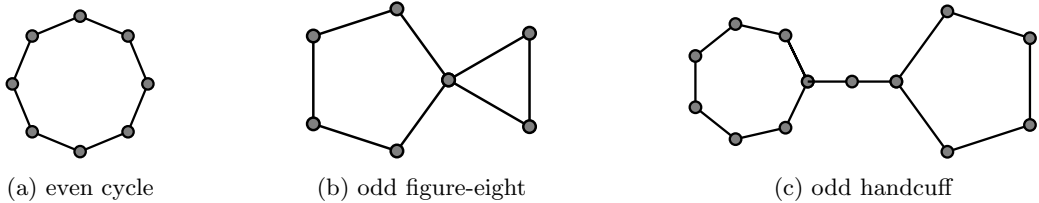



Theorem~\ref{thm:even-circ} is a direct consequence of the following lemma and Theorem~\ref{th:B}. 

\begin{lem}
A set of columns $S$ in the undirected incidence matrix $A_+(G)$  is dependent if and only
if the edge-induced subgraph of $G$ whose edges correspond to the columns in $S$ contains an even circuit.

\end{lem}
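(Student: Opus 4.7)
The plan is to argue the two implications separately. The sufficiency $(\Leftarrow)$ is an explicit construction of a dependence for each of the three even-circuit shapes, while the necessity $(\Rightarrow)$ requires a structural analysis of a minimal dependent set, which is the main combinatorial work.

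For $(\Leftarrow)$, the organising observation is that along a simple cycle, assigning alternating $\pm 1$ coefficients to consecutive edges makes the contributions at every vertex cancel pairwise; on an even cycle this gives the zero vector outright, while on an odd cycle of length $2m+1$ the alternation fails to close at a distinguished vertex $w$ (both edges incident to $w$ receive sign $+1$), and the signed column-sum equals $2\chi_w$, where $\chi_v\in K^V$ denotes the coordinate vector at $v$. From this building block I would construct the three dependences as follows: for an even cycle, the alternating sum itself; for an odd figure-eight with cycles meeting at $w$, add the two odd-cycle sums with signs arranged so that the contributions $\pm 2\chi_w$ cancel; for an odd handcuff with odd cycles at $u$ and $w$ joined by a bridge $P$, take the odd-cycle sum on $C_1$ to produce $2\chi_u$, cancel at $u$ by an alternating $\mp 2$ pattern along $P$ that deposits $\pm 2\chi_w$ at the other endpoint, and absorb the residue with the odd-cycle sum on $C_2$.

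For $(\Rightarrow)$, pick a minimal dependent subset $S_0\subseteq S$ (a matroid circuit) and let $H$ be its support subgraph; the corresponding relation $\sum_e c_e\mathbf{a}_e=0$, with $\mathbf{a}_e$ the column of $A_+(G)$ indexed by $e$, has every $c_e\neq 0$ by minimality. Then $H$ is connected, for otherwise the relation restricts to a component and yields a proper dependent subset; and the row equation $\sum_{e\ni v}c_e=0$ at each $v\in V(H)$ forces $\deg_H(v)\geq 2$, since a pendant edge would have vanishing coefficient. Hence $|E(H)|\geq |V(H)|$ and $H$ contains a cycle. If $H$ contains an even cycle $C$, sufficiency makes the columns of $C$ dependent and minimality forces $H=C$.

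The main obstacle is the remaining case in which every cycle of $H$ is odd. A single odd cycle has independent columns (alternating $\pm 1$ forces $c=-c$, hence $c=0$), so $H$ contains at least two distinct simple cycles. To conclude that $H$ is an odd figure-eight or an odd handcuff, I would pick two simple cycles $C_1,C_2$ in $H$ minimising $|E(C_1)\cup E(C_2)|$. If $C_1,C_2$ shared an edge, the symmetric difference $C_1\triangle C_2$ would be a nonempty even-size element of the cycle space, decomposing into simple odd cycles in even number, from which two edge-disjoint such cycles with strictly smaller edge-union could be extracted --- contradicting minimality. If $C_1,C_2$ were edge-disjoint and shared two or more vertices, a similar cycle-space argument applied to the topological reduction of $C_1\cup C_2$ (in which the shared vertices have degree $4$ and the remaining vertices have degree $2$) would again produce two odd simple cycles with strictly smaller edge-union. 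Hence $C_1,C_2$ share at most one vertex: if none, a shortest path in $H$ joining them completes an odd handcuff; if one, they already form an odd figure-eight. This subgraph is dependent by $(\Leftarrow)$, and minimality of $H$ forces equality.
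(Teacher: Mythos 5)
Your proof is correct, and your sufficiency direction (alternating $\pm 1$ coefficients around the cycles, $\pm 2$ along the bridge of a handcuff) is exactly the construction the paper uses. Where you genuinely diverge is in the necessity direction. The paper argues from the complementary side: it first shows that the columns of any odd-circle pseudoforest are independent (an edge at a hanging vertex cannot occur in a dependence, and a single odd cycle carries none), and then invokes the graph-theoretic dichotomy that adding one edge to an odd-circle pseudoforest either yields another such pseudoforest or creates an even circuit, together with the observation that deleting any edge of an even circuit leaves an odd-circle pseudoforest. You instead work directly with a minimal linear dependence: nonvanishing coefficients and the row equations force its support $H$ to be connected of minimum degree at least $2$, and you then extract the even circuit by a cycle-space parity argument on two cycles minimizing their edge-union. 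Your route identifies the matroid circuits without first characterizing the independent sets, and it makes fully rigorous the step the paper compresses into one sentence ("if the new graph is not a pseudoforest, then it contains an even cycle or two odd cycles connected by a bridge"); the paper's route yields the explicit description of the independent sets as odd-circle pseudoforests, which is the form actually needed to deduce Theorem 2.1 from Theorem B. The two points you leave as sketches --- that an even-degree subgraph all of whose simple cycles are odd must decompose into an even number of them, and the selection of two arcs of matching parity when the minimizing cycles are edge-disjoint but share two vertices --- are routine to complete and do not conceal a gap.
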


\begin{proof}  One can easily check that the set of columns corresponding to any odd-circle pseudoforest is linear independent. Indeed, an edge incident to a hanging vertex can not be a part of a linear dependence. Similarly, a single odd cycle gives no linear dependence of its edges/columns. 
On the other hand, all three types of even circuits provide linearly dependent sets of edges. These dependences are straightforward. For an even cycle, one has to assign the coefficients $\pm 1$ alternatingly. For the second type, see Figure~\ref{fig:evencirc}~b, one traverses this graph as an Eulerian assigning the coefficients $\pm 1$ alternatingly along the path. Finally, for the third type, see Figure~\ref{fig:evencirc}~c,  let us first choose one of two odd cycles. Then we traverse the even circuit starting at the vertex of the chosen cycle attached to the bridge  first going around the chosen cycle and assigning $\pm 1$ alternatingly on the way, then we follow  the bridge assigning the coefficients $\pm 2$ alternatingly, and, finally we traverse the second odd cycle  again assigning the coefficients $\pm 1$. By construction, the sum of the coefficients assigned to all  edges incident to any given vertex vanishes. 

To finish the proof, observe that adding an edge to an odd-circle  pseudoforest, we either obtain a new pseudoforest (in which case the set of edges/columns of the incidence matrix is still independent) or a graph containing an even circuit (in which case the set of edges is dependent). Indeed, if the new graph is not a pseudoforest, then it either contains an even cycle of two odd cycles connected by a bridge. The fact that 
removing any edge from a even circuit one obtains an   odd-circle  pseudoforest finishes the proof. 
\end{proof}

\begin{rem}
 The  matroid appearing in Theorem~\ref{thm:even-circ} is called the \emph{even-circle matroid} 
by M.~Doob~\cite{Do} and J.M.S.~Sim\~oes-Pereira~\cite{SP}.
  It  also coincides with a \emph{signed graphic matroid}  of  T.~Zaslavsky~\cite{Za1},
  which   corresponds to the case when all the edges of $G$ are given the negative sign.
 Additionally, the even-circle matroid of  $G$ is the same object as the \emph{factor matroid} in e.g.~\cite{W3}.
 It is  also worth  mentioning that the matrix $A_+(G)^T A_+(G)-2I$ equals
the adjacency matrix of the line graph of $G$ which explains the appearance of the even-circle
matroid in~\cite{Do}.
\end{rem}

\subsection{Algebra $\cC(G,\sigma,\gamma)$}
Another meaningful way to associate a graded algebra to an undirected graph $G$
 so that the dimensions of its graded components will contain non-trivial information about   $G$ is as follows.

To define it, we first transform $G$ into a \emph{gain graph}  by
choosing an orientation $\sigma$ of $G$ and a \emph{gain function} $\gamma:E\to  K\setminus \{0\}$, i.e. we assign a non-zero gain $\gamma(e)$ to each directed edge $e\in E$, see e.g.~\cite{Za2}.%

\begin{rem} Gain graphs appear in the study of flow networks with gains/losses, see~\cite[Ch.6]{GM}. They also play an important role in the topological graph theory which is  the study of 
embeddings of graphs and bundles on graphs, where they occur under the name \emph{voltage graphs}, see~\cite{Gr}.
They also form a special subclass of the so-called \emph{biased graphs} introduced in~\cite{Za2}.
\end{rem}

Now similarly to the directed incidence matrix $A(G,\sigma)$ given by~\eqref{eq:incid}, 
we introduce the \emph{incidence matrix}  $A_g(G,\sigma,\gamma)$ of
the gain graph $(G,\sigma,\gamma)$ which is  the $|V|\times |E|$-matrix
 with the entries: 
\begin{equation}
  \label{eq:gain}
  a_{v,e}=\begin{cases}
        0, &  \text{if the edge $e$ is not incident to $v$};\\
   -1, &  \text{if $e$ begins at  $v$};\\
    \gamma(e), & \text{if $e$ ends at  $v$};\\
    \gamma(e)-1, & \text{if $e$ is a loop}.
\end{cases}
\end{equation}
(In the theory of flow networks  $A_g(G,\sigma,\gamma)$ is interpreted as the matrix of additional factors such  that the flow along the edge $e$ is
multiplied by  $\gamma(e)$).

\begin{rem} 
Using the above notation, we see that for any choice of orientation $\sigma$ of $G$, one has 
$$A(G,\sigma)=A_g(G,\sigma,\gamma_+) \text{ and } A_+(G)=A_g(G,\sigma,\gamma_-)$$ where $\gamma_+$ (resp.\ $\gamma_-$)
is the constant gain function with value $\gamma_+(e)=1$ (resp.~$\gamma_+(e)=-1$) for every edge $e$. 
\end{rem}

The next definition is again similar to that of $\cC(G,\sigma)$.

\begin{dfn}
The \emph{circulation algebra $\cC(G,\sigma,\gamma)$ of a directed gain graph} $(G,\sigma,\gamma)$
is defined  by the formulas~\eqref{eq:alg}
and~\eqref{eq:gener}.
\end{dfn}

\begin{rem}
It is clear that if we change the direction of any edge $e\in E$ and, at the same time, replace
the value $\gamma(e)$ by its inverse $\gamma(e)^{-1}$, the obtained circulation algebra 
will be isomorphic to the initial one. This isomorphism is  induced by the variable change $x_e\mapsto -\gamma(e)^{-1} x_e$.
\end{rem}

\begin{dfn} Given an undirected graph $G$  with an orientation $\sigma$, a gain function $\gamma$, and a cycle $S\subset G$,    consider two oriented cycles $\overrightarrow S$ and $\overleftarrow S$ obtained by the choice of  one of two possible directions to traverse $S$. For  $\overrightarrow S$, define the  product 
 $\Gamma(\overrightarrow S)=\prod_{e\in S} \gamma^{\epsilon(e)}(e)$
where  $\epsilon(e)=1$  if the chosen direction to traverse $S$ gives the same orientation of $e$ as $\sigma$ and we take $\epsilon(e)=-1$ otherwise. (Similar expression defines $\Gamma(\overleftarrow S)$ and one has the obvious relation  $\Gamma(\overrightarrow S) \Gamma(\overleftarrow S)=1$.) The quantity $\Gamma(\overrightarrow S)$ (resp. $\Gamma(\overleftarrow S)$) will be called the \emph{gain} or the \emph{circulation} along the oriented cycle $\overrightarrow S$ (resp. $\overleftarrow S$). We say that  $S$ is \emph {gainless} if  $\Gamma(\overrightarrow S)=1$ (which is equivalent to $\Gamma(\overleftarrow S)=1$). 
\end{dfn}

\smallskip
\begin{example}
Observe that in general, the algebra $\cC(G,\sigma,\gamma)$ does depend on the choice of orientation $\sigma$ of $G$.
In particular, let $G$  be a $3$-cycle with gains $1, 2$, and $2$ of its edges.
If the edges with gain $2$ are directed in the same way, then $\dim_K \cC(G,\sigma,\gamma)=8$, while if these two edges are oppositely
directed, then $\dim_K \cC(G,\sigma,\gamma)=7$.
(Observe that in the latter case, $G$  is a gainless cycle.)
\end{example}
However, when the gains $\gamma(e)$ of all edges  are \emph{generic}, then, 
 the Hilbert series of the algebra $\cC(G,\sigma,\gamma)$ is independent of the orientation.
 More exactly, we have the following result.

\begin{thm}\label{th:bicircular}
Let $(G,\gamma)$ be a gain graph. If for every cycle $S\subset E$ and any subset $P\subset S$
of its edges, 
            \[ \prod_{e\in P}\gamma(e) \ne \ds \prod_{e\in S\setminus P}\gamma(e)~
            \]
then the Hilbert series of the circulation algebra $\cC(G,\sigma,\gamma)$ is independent of the orientation $\sigma$. In this case, the dimension $\dim_K \cC(G,\sigma,\gamma)$ is equal to the total number of  spanning
pseudoforests in $G$.
\end{thm}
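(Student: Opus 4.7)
The plan is to apply Theorem~\ref{th:B}: if we identify the column matroid $M(A_g(G,\sigma,\gamma))$ explicitly as a matroid depending only on $G$, both assertions of the theorem follow at once. Specifically, we aim to show that under the generic condition this matroid coincides with the \emph{bicircular} (or \emph{pseudoforest}) matroid $B(G)$, whose independent sets are exactly the edge subsets of $G$ inducing pseudoforest subgraphs. Since $B(G)$ depends only on the underlying graph, Theorem~\ref{th:B} then yields both the orientation-independence of the Hilbert series of $\cC(G,\sigma,\gamma)$ and the identity $\dim_K \cC(G,\sigma,\gamma) = \#\{\text{spanning pseudoforests of } G\}$ (a subset $S \subseteq E$ inducing a pseudoforest corresponds uniquely to a spanning pseudoforest after adjoining the remaining vertices as isolated points).

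The technical core is thus the following lemma: under the generic condition, a set $S \subseteq E$ yields linearly independent columns of $A_g(G,\sigma,\gamma)$ \iff the edge-induced subgraph on $S$ is a pseudoforest. For the \emph{independence} direction, I would argue on each connected component separately. A tree component admits the standard leaf-stripping reduction, since every non-loop entry of $A_g$ is nonzero. On a unicyclic component with cycle $C$, stripping leaves reduces any putative dependence to one supported on $C$ alone; walking around $C$ and propagating the two-term relation imposed at each internal vertex, one sees that after one revolution the starting coefficient gets multiplied by $\Gamma(\overrightarrow{C})$ (up to a sign that can be absorbed), so the generic hypothesis applied with $P$ equal to the edges of $C$ oriented in the traversal direction gives $\Gamma(\overrightarrow{C}) \neq 1$, forcing the dependence to be trivial.

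The main obstacle is the \emph{dependence} direction: one must exhibit an explicit non-trivial dependence for each minimal non-pseudoforest, namely a theta graph, a tight handcuff (two cycles meeting in a single vertex), or a loose handcuff (two vertex-disjoint cycles joined by a path). In each case, walking along each constituent cycle or path shows that the coefficients of the edges are determined by one or two scalars (one per cycle, one per connecting path); the internal-vertex constraints are automatically satisfied, so the only remaining conditions live at the branch vertices of the configuration. This reduces to a linear system—one equation in two unknowns for the tight handcuff, two equations in three unknowns for the theta and the loose handcuff—which is always non-trivially solvable \emph{provided} that its coefficients do not vanish identically. These coefficients are either of the form $1 - \Gamma(\overrightarrow{C_i})$ for some cycle $C_i$, or of the form $\prod_{e \in P}\gamma(e) - \prod_{e \in S\setminus P}\gamma(e)$ for some cycle $S$ and subset $P \subseteq S$, and the generic hypothesis is precisely tailored to ensure that none of them vanishes. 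Once this is established, both directions of the key lemma combine to give $M(A_g(G,\sigma,\gamma)) = B(G)$, and Theorem~\ref{th:B} concludes the argument.
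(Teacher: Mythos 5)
Your proposal is correct and takes essentially the same approach as the paper: identify the column matroid of $A_g(G,\sigma,\gamma)$ under the genericity hypothesis as the bicircular matroid of $G$ (equivalently, as the gain matroid with no gainless cycles for any orientation) and apply Theorem~\ref{th:B}; the paper simply cites Zaslavsky~\cite{Za2} for this identification, which you instead prove directly by the leaf-stripping and cycle-propagation argument. One small simplification: your ``dependence direction'' needs no explicit relations, since any non-pseudoforest has a connected component with more edges than vertices, so the corresponding columns are dependent by a dimension count.
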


\begin{proof} It is easy to check  %
that the independent sets of the matroid represented by the matrix
$A_g(G,\sigma,\gamma)$ are  exactly the pseudoforests without gainless cycles, see  e.g.~\cite{Za2}.   
Therefore, from~\cite{W2,PSS} it follows that the dimension of the algebra $\cC(G,\sigma,\gamma)$ is equal to the  number of spanning pseudoforests in $G$.
\end{proof}

\begin{rem}
Matroid appearing in Theorem~\ref{th:bicircular} coincides with the well-known \emph{bicircular matroid}, see e.g. \cite {Ma}.   Some results about graphs with isomorphic bicircular matroids can be found  in~\cite{CGW} (see also~\cite{Neu} and~\cite{SOSG}).
\end{rem} 

Theorem~\ref{th:bicircular} leads to further generalizations.
  For example, we may consider the so-called \emph{generalized incidence matrices}. 

\begin{dfn}
 A matrix  $A$ with entries
 in a field  $K$  is called a \emph{generalized incidence matrix} if its every column has at most two non-vanishing entries.
\end{dfn}

To each generalized incidence matrix $A$ we associate a  multigraph $G=G(A)$
whose vertices $v\in V(A)$ correspond the rows of $A$ and whose edges $E$ correspond to its non-zero columns. Namely, an edge corresponding to a column with two non-vanishing entries connects the vertices  corresponding to the respective rows while the edge corresponding to a column with a single non-vanishing entry is a loop at the vertex corresponding to the respective row.

If we choose an orientation $\sigma$ of $G(A)$, (e.g.  by fixing a linear ordering on $V$ and assigning to each edge the direction from the smaller incident vertex to the the larger incident vertex), 
then $G(A)$ becomes a gain graph as follows.
If $e=(a,b)\in E$ is an edge of $G(A)$ directed, say, from $v$ to $u$, then we
set $\gamma(e):=-A(u,e)/A(v,e)$. Additionally, if $e=(v)$ is a loop, then $\gamma(e):=A(v,e)+1$.

\smallskip

It is clear that the vector matroid represented by  $A$ is the same as the gain matroid of the directed gain graph 
$G(A)$.

\begin{rem}
All graph-related matroids which we discussed earlier are  represented by 
generalized incidence matrices and we can restate  the above theorems in these terms. 
\end{rem}

The problem of   describing generalized incidence matrices 
   which give rise to matroids which (up to an isomorphism) do not depend  on the orientation of their gain graphs seems to be quite natural.  
We will call such matroids \emph{orientation-independent}   and the final result of this note describes them.  


\begin{thm}\label{th:main}
 The matroid corresponding to a gain graph $(G,\gamma)$ with some orientation $\sigma$
   is orientation-independent if and only if
 for every cycle $S\subset E$ in $G$, we have that 
 \\[0.5pt]
 either
\\[0.5pt]
\noindent
  {\rm (1)}  for every $e\in S$, $\gamma(e)=\pm 1$;
\\[0.5pt]
      or,
 \\[0.5pt]
     \noindent
  {\rm (2)}     
       for any nonempty subset $P\subset S$, the product of the gains of its edges is distinct from that of the gains of its complement, i.e. 
            \[ \prod_{e\in P}\gamma(e) \ne \ds \prod_{e\in S\setminus P}\gamma(e)~.
            \]
    \end{thm}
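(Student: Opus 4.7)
I would reduce Theorem~\ref{th:main} to a per-cycle analysis of the gain function $\gamma$. By the remark preceding the theorem, reversing an edge $e$ while simultaneously inverting $\gamma(e)$ yields an isomorphic matroid; so for a fixed reference orientation $\sigma$, every other orientation corresponds (up to isomorphism of matroids) to the same $\sigma$ together with the gain function $\gamma_F$ obtained from $\gamma$ by inverting its values on some subset $F\subseteq E$. Orientation-independence of the matroid is thus equivalent to its invariance under all such gain-inversions $\gamma\mapsto\gamma_F$.

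Next, I would invoke the description recalled in the proof of Theorem~\ref{th:bicircular}: the independent sets of $M(G,\sigma,\gamma)$ are exactly the pseudoforests whose cycles are all non-gainless (unbalanced). Pseudoforest-hood is a property of $G$ alone, so the matroid on $E$ is determined by the set $\mathcal{B}(\gamma)$ of balanced simple cycles. The problem becomes: characterize those $\gamma$ for which $\mathcal{B}(\gamma_F)=\mathcal{B}(\gamma)$ for every $F\subseteq E$, equivalently, for which the balanced status of every simple cycle $S$ is invariant under all gain-inversions.

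For a fixed $S$, writing $P=S\setminus F$, the circulation of $S$ under $\gamma_F$ equals $\prod_{e\in P}\gamma(e)\cdot\prod_{e\in S\setminus P}\gamma(e)^{-1}$, so $S$ is balanced iff $\prod_{e\in P}\gamma(e)=\prod_{e\in S\setminus P}\gamma(e)$. As $F$ varies, $P$ ranges over all subsets of $S$; invariance demands that this equality hold either for every $P$ or for none. The second case, noting that $P$ and $S\setminus P$ yield the same equality, is precisely condition~(2). The first case, applied to $P=S$ and to each $P=S\setminus\{e\}$, forces $\prod_S\gamma=1$ and $\gamma(e)^2=1$ for every $e\in S$, hence condition~(1); conversely, condition~(1) makes the circulation the fixed constant $\prod_S\gamma\in\{\pm 1\}$, so $S$ is invariantly balanced (always or never, depending on the sign). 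This gives the forward direction: if every cycle of $G$ satisfies (1) or (2), then $\mathcal{B}(\gamma_F)=\mathcal{B}(\gamma)$ for every $F$, and the matroid on $E$ is literally invariant.

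For the converse, suppose some simple cycle $S$ violates both (1) and (2); then there exist $e_0\in S$ with $\gamma(e_0)^2\ne 1$ and $P_0\subseteq S$ with $\prod_{P_0}\gamma=\prod_{S\setminus P_0}\gamma$. A one-line calculation shows $P_1=P_0\triangle\{e_0\}$ fails the equality (else $\gamma(e_0)^2=1$), so $S$ is balanced under the orientation corresponding to $F=S\setminus P_0$ and unbalanced under that corresponding to $F=S\setminus P_1$. In the former, $S$ is a circuit of the matroid; in the latter, an independent set. The main obstacle is to certify that this literal change of balanced-cycle set forces a genuine non-isomorphism of matroids rather than a mere relabeling; this is handled by comparing a numerical matroid invariant (the number of independent subsets, i.e.\ the dimension of $\cC$, or the full Tutte polynomial) between the two orientations, ideally by choosing $S$ of minimum length failing both conditions so that no shorter cycle changes its balanced status and the rank change at $S$ cannot be absorbed by other cancellations.
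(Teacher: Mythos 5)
Your proposal follows the same route as the paper's proof: the independent sets of the gain matroid are the pseudoforests containing no gainless cycle, reversing an edge amounts to inverting its gain, and orientation-independence therefore reduces to the invariance of the set of gainless cycles under all gain-inversions $\gamma\mapsto\gamma_F$. Your per-cycle computation identifying this invariance with the dichotomy (1)-or-(2) is correct, and is in fact carried out more carefully than in the paper, which only records the consequence of flipping a single edge. The one genuine divergence is the ``main obstacle'' you flag at the end, and there your proposal stops short of a proof. Note first that the paper's own argument silently sidesteps the issue by reading ``different gain matroid'' as literal inequality of the two matroids on the common ground set $E$; for that reading your pair of orientations $F=S\setminus P_0$ and $F=S\setminus P_1$ already finishes the converse, since the cycle $S$ (itself a pseudoforest) is a circuit of one matroid and an independent set of the other. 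If instead one takes the paper's stated definition literally (orientation-independence ``up to an isomorphism''), a further argument is indeed required, but the one you sketch does not close the gap: flipping the single edge $e_0$ may change the balance status of other cycles through $e_0$ in both directions, and choosing $S$ of minimum length does not by itself prevent the resulting gains and losses in the count of independent subsets (or in the Tutte polynomial) from cancelling. So you should either state explicitly that you are proving equality of the matroids on $E$ --- which is all the paper's proof establishes --- or replace your last paragraph by an actual invariant computation; as written it is a plan rather than a proof.
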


\begin{proof}
Indeed, the independent sets of the matroid corresponding to $A_g(G,\sigma,\gamma)$ are given by the
pseudoforests without gainless cycles, see~\cite{Za2}. Change of the orientation of an edge $e$
corresponds to replacing $\gamma(e)$ by $(\gamma(e))^{-1}$. So changing the orientation of an edge
$e\in E$ we will obtain a different  gain matroid if and only if this edge belongs to a cycle $S$ which is gainless  
 for one orientation of $e$ and not gainless for the opposite
orientation. This implies that $\gamma(e)\ne \gamma(e)^{-1}$, i.e.\ $\gamma(e)\ne \pm 1$ and that
$e$ belongs to some cycle $S$ in $G$ which is gainless for at least one possible choice of orientations of the edges of $G$. The condition that $S$ never becomes gainless for whatever choice of orientations of edges is given by  (2) in the formulation of Theorem~\ref{th:main}.  
\end{proof}

\section{Outlook}
\label{sec:concl}

\noindent
{\bf 1.} The three types of ``cycle'' matroids (i.e. the standard cyclic, the bicircular, and the
  even-circle matroids) occur as the  special cases of the above orientation-independent matroids. 
  Is there  a way to decompose  an arbitrary orientation-independent matroid into matroids of these
 three types?  


\smallskip
\noindent
{\bf 2.} What can be said about two graphs $G_1$  and $G_2$ for which all three (or maybe only two)
  of the introduced algebras are isomorphic?%

 Notice that D.~Wagner~\cite{W3} characterizes all graphs $G$ for which $\cC_+(G)\simeq\cC_+(\Gamma)$, where
  $\Gamma$ is the fixed $4$-connected bipartite graph. (In this case, $\cC_+(\Gamma)\simeq\cC(G)$.)


\smallskip
\noindent
{\bf 3.} Is it possible to extend Theorem~\ref{th:main} from the case of generalized incidence matrices to arbitrary matrices $A$.

\smallskip
\noindent
{\bf 4.} Observe that gain graphs can be defined with with values of the gains belonging to an arbitrary group 
 (comp. with Dowling geometries), see.~\cite{Dw,Za2}. 
   If we replace $\pm 1$ by arbitrary group elements of order $2$,  our results should  more or less straightforwardly generalize to the case of abelian groups.  However a generalization to the case of non-abelian groups might be highly non-trivial.



\end{document}